\newcommand{\Z}{\ensuremath{\mathbb{Z}}}
\newcommand{\Q}{\ensuremath{\mathbb{Q}}}
\newcommand{\R}{\ensuremath{\mathbb{R}}}
\newcommand{\Sm}{\ensuremath{\mathcal{S}}}
\DeclareMathOperator{\GL}{GL}
\DeclareMathOperator{\mn}{Min}
\DeclareMathOperator{\vol}{Vol}
\DeclareMathOperator{\Tr}{Tr}
\newcommand{\V}{\ensuremath{\mathcal{V}}}
\newcommand{\norm}[1]{\left\lVert#1\right\rVert}
\journal{Advances in Mathematics}
\newtheorem{theorem}{Theorem}[section]
\newtheorem{lemma}[theorem]{Lemma}
\newtheorem{cor}[theorem]{Corollary}
\theoremstyle{definition}
\numberwithin{equation}{section}
\tikzset{surface/.style={draw=black, fill=blue!40!white, fill opacity=.0}}
\tikzset{surface2/.style={fill=black!40!white, fill opacity=.3}}
\tikzset{boundary1/.style={draw=black, semithick, dashed}}
\tikzset{boundary2/.style={draw=black, semithick}}
\tikzset{boundary3/.style={draw=black, ultra thin}}
\tikzset{smalltext/.style={font=\tiny}}
\begin{document}

\begin{frontmatter}



\title{An upper bound on the number of perfect quadratic forms \footnote{Supported by the ERC Advanced Grant 740972 (ALGSTRONGCRYPTO).}}


\author{W.P.J. van Woerden}%
\ead{wvw@cwi.nl}
\address{Cryptology Group, CWI, Amsterdam, The Netherlands}

\begin{abstract}
   In a recent publication Roland Bacher showed that the number $p_d$ of non-similar perfect $d$-dimensional quadratic forms satisfies $e^{\Omega(d)} < p_d < e^{O(d^3\log(d))}$. We improve the upper bound to $e^{O(d^2\log(d))}$ by a volumetric argument based on Voronoi's first reduction theory.
\end{abstract}

\begin{keyword}
Perfect quadratic form \sep Perfect lattice


\MSC[2010] 11H55 \sep 05E18
\end{keyword}

\end{frontmatter}


\section{Introduction} 
\label{sec:introduction}
The sphere packing problem is a classical problem with connections to fields of mathematics, information
theory and physics. This problem asks how to pack $d$-dimensional identical balls in $\R^d$ such that their density, the proportion of $\R^d$ they fill, is maximized. All best known sphere packings up to dimension $9$ are in fact lattice packings, i.e. sphere packings such that the centers of the balls form a discrete additive group. Therefore a natural restriction of the sphere packing problem is the lattice packing problem. In 1908, in his famous work \cite{voronoi1908nouvelles}, Voronoi introduced an algorithm that solves the lattice packing problem in any dimension in finite time. Voronoi showed that any lattice with optimal packing density must correspond to a so-called perfect (quadratic) form and his algorithm enumerates the finitely many perfect forms up to similarity in a fixed dimension. However, the number of non-similar perfect forms grows super-exponentially in the dimension and as a result Voronoi's algorithm has only been completely executed up to dimension $8$ \cite{korkine1873formes,barnes1957complete,jaquet1993enumeration,sikiric2007classification}. The exact number of non-similar perfect forms from dimension $2$ up to $8$ is $1,1,2,3,7,33$ and $10916$ respectively and in dimension $9$ more than $20$ million were found \cite{vanwoerdenperfect}. An intriguing question is to characterize the growth of the number of non-similar perfect forms. 

A bound on the number of perfect forms has consequences beyond estimating the complexity of Voronoi's algorithm. In 1998 C. Soul\'e \cite{soule1999perfect} proved an upper bound of $e^{O(d^4\log(d))}$, which he used to prove a statement related to Vandiver's Conjecture.

Furthermore, from the field of physics, there is interest in the statistical analysis of variations of Voronoi's algorithm based on random walks, first introduced by A. Andreanov and A. Scardicchio \cite{andreanov2012random}. They conjecture a growth of $e^{\Theta(d^2)}$ and several variations of Voronoi's algorithm are designed \cite{andreanov2012random,andreanov2016extreme} under the assumption that this conjecture is true. 

Recently R. Bacher \cite{bacher2018number} proved a lower bound of $e^{\Omega(d)}$ and an upper bound of $e^{O(d^3\log(d))}$. Bacher already conjectured our improved upper bound of $e^{O(d^2\log(d))}$ substantiated by heuristic arguments. However his proof methods and heuristic arguments do not seem to overlap with the proof we state here. 

A useful property of $d$-dimensional perfect forms is that each of them has a corresponding full rank cone inside the cone of $d$-dimensional positive semidefinite quadratic forms. In fact if we look at all perfect forms up to scaling their corresponding cones are essentially disjoint. We use this property to prove the upper bound of $e^{O(d^2 \log(d))}$ with a volumetric argument. We show that each perfect form is similar to a perfect form of which the corresponding cone has at least a certain volume. As only a certain amount of such cones fit in a disjoint manner in the cone of positive semidefinite quadratic forms we obtain an upper bound on the number of similarity classes of perfect forms.

In Section \ref{sec:basic_definitions_and_facts} we discuss the preliminaries needed for the proof and in Section \ref{sec:on_the_number_of_perfect_lattices} we prove the upper bound of $e^{O(d^2 \log(d))}$. 


\section{Preliminaries} 
\label{sec:basic_definitions_and_facts}

\subsection{Notation} 
\label{sub:notation}

We denote the sets of integers, rationals and reals by $\Z, \Q$ and $\R$ respectively. With $\R_{\geq 0}$ and $\R_{>0}$ we denote the set of all non-negative and positive reals respectively. The set of integers $\{ 1, 2, \ldots, m\}$ is denoted by $\lbrack m \rbrack$ for any integer $m \geq 1$. A vector $v \in \R^d$ is interpreted as a single column matrix $v \in \R^{d \times 1}$. The $i$-th standard unit vector is denoted by $e_i \in \Z^d$. The transpose of a vector $v \in \R^d$ or of a matrix $A \in \R^{d \times d}$ is denoted by $v^t$ or $A^t$ respectively. The standard inner product is often denoted by $v^t w = \sum_{i=1}^d v_i \cdot w_i \in \R$ and the outer product by $vw^t = (v_i \cdot w_j)_{i,j} \in \R^{d \times d}$ for vectors $v,w \in \R^d$. The trace and determinant of a square matrix $A$ are denoted by $\Tr(A)$ and $\det(A)$ respectively. The interior of a measurable set $S \subset \R^n$, i.e. the largest open set contained in $S$, is denoted by $\text{Int}(S)$. A cone is a set $C \subset \R^d$ that is closed under positive scaling. Let $X \subset \R^d$, then we denote by $\text{cone}(X)$ the cone given by all non-negative linear combinations of the elements in $X$ and by $\text{conv}(X)$ the convex set given by all convex combinations of the elements in $X$. Furthermore we denote by $\text{rank}(X)$ the dimension of the linear subspace spanned by the elements in $X$. 


\subsection{Quadratic forms} 
\label{sub:quadratic_form_}
  We associate with every symmetric real matrix $Q \in \R^{d \times d}$ a (real) \emph{quadratic form} in $d \geq 1$ variables given by
  \begin{align*}
    Q :\  &\R^d \to \R, \\
      &x \mapsto Q\lbrack x \rbrack := x^tQx.
  \end{align*}
Remark that $Q\lbrack x \rbrack = Q \lbrack -x \rbrack$ for all $x \in \R^d$. The \emph{space of all quadratic forms} is denoted by
\begin{align*}
  \Sm^d := \{ Q \in \R^{d \times d} : Q^t = Q \}.
\end{align*}
Note that $\Sm^d$ is an $n:=\binom{d+1}{2}$-dimensional real vector space, which is a Euclidean space when endowed with the standard \emph{trace inner product}
\begin{align*}
  \langle P, Q \rangle := \Tr(P^tQ) = \sum_{i,j \in \lbrack d \rbrack} P_{ij}Q_{ij}.
 \end{align*}
The norm induced by this inner product is the standard Frobenius norm.
By cyclicity of the trace, we have $x^tQx = \langle Q, xx^t \rangle$. Under any fixed ordering of the indices $\{ (i,j) \in \lbrack d \rbrack \times \lbrack d \rbrack : i \leq j \}$ a natural isometry $\phi$ from $\Sm^d$ to the canonical Euclidean space $\R^n$ is given by:
\begin{align*}
  \phi : \Sm^d \to \R^n : Q \mapsto ( q_{ij} )_{i \leq j} 
\end{align*}
where $q_{ii} := Q_{ii}$ and $q_{ij} := \sqrt{2}Q_{ij} = \sqrt{2}Q_{ji}$ for $i<j$. Indeed we have
\begin{align*}
  \langle \phi(P), \phi(Q) \rangle = \langle P,Q \rangle.
\end{align*}
This isometry will implicitly be used in the figures. Moreover, we consider the \emph{cone of positive definite quadratic forms} (PQFs)
\begin{align*}
  \Sm_{>0}^d := \{ Q \in \Sm^d : Q \text{ is positive definite}  \},
\end{align*}
its closure, the \emph{cone of positive semidefinite quadratic forms}
\begin{align*}
  \Sm_{\geq 0}^d := \{ Q \in \Sm^d : Q \text{ is positive semidefinite}\},
\end{align*}
and finally its historically named \emph{rational closure}  \cite{namikawa2006toroidal}
\begin{align*}
\tilde{S}_{\geq 0}^d := \text{cone}(\{ xx^t : x \in \Z^n \}) \subset \Sm_{\geq 0}^d. 
\end{align*}

\subsection{Arithmetical equivalence} 
\label{sub:arithmetical_equivalence}

Two quadratic forms are \emph{arithmetically equivalent} if they lie in the same orbit under the action $(Q, U) \mapsto U^tQU$ of the multiplicative group
\begin{align*}
  \GL_d(\Z) := \{ U \in \Z^{d \times d} : |\det U | = 1 \}
\end{align*}
of unimodular matrices. We call two PQFs $Q,Q' \in \Sm_{>0}^d$ \emph{similar} if and only if $Q$ is arithmetically equivalent to $\alpha Q'$ for some $\alpha  \in \R_{>0}$.


\subsection{Positive definite quadratic forms} 
\label{sub:positive_definite_quadratic_forms_}

For any PQF $Q \in \Sm_{>0}^d$ there exists a smallest real number $r > 0$ for which $Q \lbrack x \rbrack = r$ has an integral solution. We define this number as the \emph{arithmetical minimum} denoted by
\begin{align*}
  \lambda_1(Q) := \min\limits_{x \in \Z^d \setminus \{ 0 \}} Q\lbrack x \rbrack.
\end{align*}
More generally, we define for $i \in \lbrack d \rbrack$ the \emph{$i$-th successive minima} $\lambda_i(Q)$ as
\begin{align*}
  \lambda_i(Q) := \text{inf}\{ \lambda > 0 &: \exists \text{ $\R$-linearly independent }x_1, \ldots, x_i \in \Z^n \setminus \{ 0 \} \\
  &: Q\lbrack x_j\rbrack \leq \lambda \ \text{ for all } j \in \lbrack i \rbrack \},
\end{align*}
where the infinum is in fact a minimum. Note that $\lambda_i(\alpha Q) = \alpha \lambda_i(Q)$ for any $\alpha \in \R_{>0}$ . Furthermore, the successive minima are invariant under arithmetical equivalence, because $(U^tQU)\lbrack x \rbrack = Q\lbrack Ux \rbrack$ and $U\Z^d=\Z^d$ for all $U \in \GL_d(\Z)$. So under the assumption that $\lambda_1(Q)=\lambda_1(Q')$ for two PQFs $Q,Q'\in \Sm_{>0}^d$, the notions of similarity and arithmetical equivalence coincide. 

By applying Hermite-Korkine-Zolotarev lattice reduction to a PQF we can always find an arithmetically equivalent PQF for which the successive minima are attained, up to a factor linear in the dimension, by the standard basis of $\Z^d$.
\begin{lemma}[Lagarias, Lenstra, Schnorr \cite{Lagarias1990}] \label{hkz}
Consider a PQF $Q \in \Sm_{> 0}^d$. Then there exists a PQF $Q' \in \Sm_{> 0}^d$ arithmetically equivalent to $Q$ such that
  \begin{align*}
    Q'\lbrack e_i \rbrack = Q'_{ii} \leq \frac{i+3}{4} \lambda_i(Q) & & \text{for all } i \in \lbrack d \rbrack.
  \end{align*}\vspace{-0.6cm}
\end{lemma}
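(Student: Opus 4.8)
The plan is to recast the statement in lattice language and to construct $Q'$ by Hermite--Korkine--Zolotarev (HKZ) reduction. First I would write $Q = B^tB$ for some invertible $B \in \R^{d\times d}$ (Cholesky decomposition), so that $Q\lbrack x\rbrack = \norm{Bx}^2$ and the columns $b_i := Be_i$ form a basis of the lattice $\Lambda := B\Z^d$. Replacing $Q$ by $U^tQU$ with $U \in \GL_d(\Z)$ corresponds exactly to replacing $(b_1,\dots,b_d)$ by another $\Z$-basis of $\Lambda$, while $Q_{ii} = Q\lbrack e_i\rbrack = \norm{b_i}^2$, and $\lambda_i := \lambda_i(Q)$ equals the $i$-th \emph{squared-length} successive minimum of $\Lambda$, i.e. the minimum over $\R$-linearly independent $v_1,\dots,v_i \in \Lambda\setminus\{0\}$ of $\max_j \norm{v_j}^2$. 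Thus it suffices to exhibit a $\Z$-basis $(b_1,\dots,b_d)$ of $\Lambda$ with $\norm{b_i}^2 \le \tfrac{i+3}{4}\lambda_i$ for all $i$.

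Next I would build an HKZ-reduced basis recursively. Choose $b_1 \in \Lambda$ with $\norm{b_1}^2 = \lambda_1$; let $\pi$ be the orthogonal projection onto $b_1^\perp$; recursively pick an HKZ-reduced basis of the rank-$(d-1)$ lattice $\pi(\Lambda)$ and lift its vectors to $b_2,\dots,b_d \in \Lambda$; finally size-reduce, i.e. subtract integer combinations of $b_1,\dots,b_{i-1}$ from $b_i$ so that the Gram--Schmidt coefficients $\mu_{ij} = \langle b_i, b_j^*\rangle/\norm{b_j^*}^2$ satisfy $|\mu_{ij}| \le \tfrac12$ for $j<i$. Since size reduction leaves the Gram--Schmidt vectors $b_j^*$ unchanged, one gets by construction that $\norm{b_i^*}^2 = \lambda_1\big(\pi_{i-1}(\Lambda)\big)$, where $\pi_{i-1}$ is orthogonal projection onto $\{b_1,\dots,b_{i-1}\}^\perp$ and $\lambda_1(\cdot)$ denotes the first squared-length minimum.

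The key estimate is then purely geometric: $\lambda_1(\pi_{i-1}(\Lambda)) \le \lambda_i$ for every $i$. Indeed, pick $\R$-linearly independent $v_1,\dots,v_i \in \Lambda$ with $\norm{v_j}^2 \le \lambda_i$; since $\mathrm{span}(b_1,\dots,b_{i-1})$ has dimension $i-1$, some $v_k$ lies outside it, so $\pi_{i-1}(v_k)$ is a nonzero vector of $\pi_{i-1}(\Lambda)$ with $\norm{\pi_{i-1}(v_k)}^2 \le \norm{v_k}^2 \le \lambda_i$. In particular $\norm{b_j^*}^2 = \lambda_1(\pi_{j-1}(\Lambda)) \le \lambda_j \le \lambda_i$ for $j \le i$. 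Combining this with the Gram--Schmidt identity and $|\mu_{ij}| \le \tfrac12$,
\begin{align*}
  \norm{b_i}^2 &= \norm{b_i^*}^2 + \sum_{j<i}\mu_{ij}^2\,\norm{b_j^*}^2 \le \norm{b_i^*}^2 + \tfrac14\sum_{j<i}\norm{b_j^*}^2 \\
  &\le \lambda_i + \tfrac14\sum_{j<i}\lambda_j \le \lambda_i + \tfrac{i-1}{4}\,\lambda_i = \tfrac{i+3}{4}\,\lambda_i .
\end{align*}
Setting $Q' := B'^tB'$ for the matrix $B'$ whose columns are the new basis yields a PQF arithmetically equivalent to $Q$ with $Q'_{ii} \le \tfrac{i+3}{4}\lambda_i(Q)$, as claimed. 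I expect the main obstacle to be purely organizational: verifying that $\pi(\Lambda)$ is again a full-rank lattice (so the recursion is well founded), that the lifted-and-size-reduced vectors really do form a $\Z$-basis of $\Lambda$, and that size reduction affects only the $b_i$ and not the $b_i^*$. The geometric inequality $\lambda_1(\pi_{i-1}(\Lambda)) \le \lambda_i$ is the conceptual heart, but it is short once the recursive setup is in place.
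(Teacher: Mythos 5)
Your proof is correct: the paper does not prove this lemma at all but simply cites Lagarias--Lenstra--Schnorr, and your argument (HKZ reduction, $\norm{b_j^*}^2=\lambda_1(\pi_{j-1}(\Lambda))\le\lambda_j$ via the projection of a short vector outside $\mathrm{span}(b_1,\dots,b_{j-1})$, then size reduction and $\norm{b_i}^2\le\lambda_i+\tfrac14\sum_{j<i}\lambda_j\le\tfrac{i+3}{4}\lambda_i$) is exactly the standard proof from that reference. The organizational points you flag (primitivity of a shortest vector, lifting a basis of $\pi(\Lambda)$, size reduction not altering the Gram--Schmidt vectors) are all routine and go through as expected.
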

We define the set of \emph{minimal vectors} of a PQF $Q \in \Sm_{>0}^d$ as
\begin{align*}
  \mn Q := \{ x \in \Z^d : Q\lbrack x \rbrack = \lambda_1(Q) \}.
\end{align*}
Note that if $Q' = U^tQU$, then $\mn Q = U\cdot \mn Q'$. We also define what is called the \emph{Voronoi domain} $\V(Q)$ of a PQF $Q \in \Sm_{>0}$ as
\begin{align*}
  \V(Q) := \text{cone}( \{ xx^t : x \in \mn Q \} )  \subset \tilde{S}_{\geq 0}^d. 
\end{align*}
A PQF $Q \in \Sm_{>0}^d$ is called \emph{perfect} if the set of equations
\begin{align*}
   \{ Q'\lbrack x \rbrack = \lambda_1(Q) \text{ for all } x \in \mn Q \},
\end{align*}
has the unique solution $Q' = Q$ among $Q' \in \Sm^d$.  That is, a perfect form is uniquely determined by its minimal vectors. Recall that $Q \lbrack x \rbrack= \langle Q, xx^t \rangle$, therefore a PQF $Q \in \Sm_{>0}^d$ is perfect if and only if its Voronoi domain $\V(Q)$ has full rank $n=\binom{d+1}{2}$ in $\Sm^d$. In particular any perfect form has at least $n$ minimal vectors up to sign.

\newsavebox{\smlmat}
\savebox{\smlmat}{$\left(\begin{smallmatrix}2&1\\1&2\end{smallmatrix}\right)$}

\begin{figure}[h!]
\begin{center}
  \tdplotsetmaincoords{70}{70}
\tdplotsetrotatedcoords{-90}{180}{-90}
\begin{tikzpicture}[tdplot_main_coords]
  \coordinate (O) at (0,0,0);

    \begin{scope} [canvas is xy plane at z=3]
      \draw[] circle (3);
      \end{scope}
    \draw[] (O) -- (2.04, 2.2, 3);
    \draw[] (O) -- (0, -3, 3);
   \draw[boundary2] (O) -> (3,0,3);
   \draw[boundary2] (O) -> (-3,0,3);
   \draw[boundary2] (O) -> (0,3,3);
   \draw[boundary1] (3,0,3) -- (-3,0,3) -- (0,3,3) -- cycle;
   \fill[surface2] (O) -- (3,0,3) -- (0,3,3) -- cycle;
   \fill[surface2] (O) -- (-3,0,3) -- (0,3,3) -- cycle;
   \fill[surface2] (O) -- (3,0,3) -- (-3,0,3) -- cycle;
\end{tikzpicture}
  \caption{Voronoi domain of the perfect form \usebox{\smlmat} in the cone $\Sm_{\geq 0}^2$.}
\end{center}
\end{figure}

For a PQF $Q \in \Sm_{>0}^d$ we define the \emph{dual} PQF as the inverse matrix $Q^{-1} \in \Sm_{>0}^d$; this coincides with lattice duality. Note that if the PQFs $Q,Q' \in \Sm_{>0}^d$ are arithmetically equivalent by $U$, then $Q^{-1}$ and $(Q')^{-1}$ are arithmetically equivalent by $U^{-t}$. There are several metric relations between the PQFs $Q$ and $Q^{-1}$, known as \emph{transference theorems}. In particular for the successive minima we have bounds from Banaszczyk.
\begin{theorem}[Banaszczyk {\cite[Thm. 2.1]{Banaszczyk1993}}] \label{transference}
Consider a PQF $Q \in \Sm_{>0}^d$. Then the successive minima of $Q$ and its dual $Q^{-1} \in \Sm_{>0}^d$ satisfy
\begin{align*}
  \lambda_i(Q) \cdot \lambda_{d-i+1}(Q^{-1}) \leq d^2 & & \text{for all } i \in \lbrack d \rbrack.
\end{align*}
\end{theorem}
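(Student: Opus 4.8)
This is the Banaszczyk transference theorem expressed in the language of quadratic forms, so the plan is to translate it into the Euclidean-lattice setting and then reconstruct Banaszczyk's harmonic-analytic proof. First I would factor $Q = A^tA$ with $A \in \GL_d(\R)$ (a Cholesky decomposition), so that $Q[x] = \norm{Ax}^2$ and hence $\lambda_i(Q) = \lambda_i(L)^2$ for the full-rank lattice $L := A\Z^d$, where $\lambda_i(L)$ denotes the ordinary ($\ell_2$) successive minimum. Since $L^* = A^{-t}\Z^d$ has Gram matrix $(A^{-t})^tA^{-t} = Q^{-1}$, we also get $\lambda_{d-i+1}(Q^{-1}) = \lambda_{d-i+1}(L^*)^2$, and the claimed inequality becomes equivalent to the classical transference bound $\lambda_i(L)\cdot\lambda_{d-i+1}(L^*) \le d$ for every full-rank lattice $L \subset \R^d$ and every $i \in [d]$.

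The engine of the proof is the Gaussian mass. Write $B$ for the closed Euclidean unit ball, $\rho(x) := e^{-\pi\norm{x}^2}$, and $\rho(X) := \sum_{x\in X}\rho(x)$ for countable $X$. Since $\rho$ is its own Fourier transform, Poisson summation gives for any coset
\begin{align*}
  \rho(L+v) = \frac{1}{\det L}\sum_{w\in L^*}\rho(w)\cos\bigl(2\pi\langle v,w\rangle\bigr),
\end{align*}
whence $\rho(L+v) \le \rho(L) = \rho(L^*)/\det L$, and, isolating the $w=0$ term and bounding the rest in modulus, $\rho(L+v) \ge \bigl(2 - \rho(L^*)\bigr)/\det L$. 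The essential analytic input is Banaszczyk's tail estimate: for a universal constant $c = O(1)$, every lattice, every $v$, and every $r \ge c\sqrt{d}$,
\begin{align*}
  \rho\bigl((L+v)\setminus rB\bigr) \le 2^{-d}\,\rho(L),
\end{align*}
proved by comparing the tail sum with a Gaussian integral after lifting to one extra coordinate. I expect this step to be the real work: sharp control of the constants here is exactly what forces the final bound to come out as $d$ rather than merely $O(d)$.

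From here the extreme cases $i=1$ and $i=d$ follow via the covering radius $\mu(\cdot)$. If $\lambda_1(L^*) > c\sqrt{d}$ then every nonzero $w \in L^*$ lies outside $(c\sqrt{d})B$, so $\rho(L^*\setminus\{0\}) \le 2^{-d}\rho(L^*)$ and hence $\rho(L^*)$ is within $2^{-d}$ of $1$. Given any lattice $M$, rescale so that $\lambda_1(M^*)$ sits just above $c\sqrt{d}$; then for every $v$,
\begin{align*}
  \rho\bigl((M+v)\cap (c\sqrt{d})\,B\bigr) = \rho(M+v) - \rho\bigl((M+v)\setminus (c\sqrt{d})B\bigr) \ge \frac{2 - (1+2^{-d})\rho(M^*)}{\det M} > 0,
\end{align*}
so $M+v$ meets the ball of radius $c\sqrt{d}$; therefore $\mu(M) \le c\sqrt{d}$, and undoing the scaling yields $\mu(M)\cdot\lambda_1(M^*) = O(d)$ (Banaszczyk's optimised parameters give $\mu(M)\lambda_1(M^*) \le d/2$). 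Combining with the classical Jarník inequality $\lambda_d(M) \le 2\mu(M)$ gives $\lambda_d(M)\cdot\lambda_1(M^*) \le d$, and applying this to $M = L$ and to $M = L^*$ (using $L^{**} = L$) settles $i = d$ and $i = 1$.

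For intermediate $1 < i < d$ one localises to a subspace: let $K$ be the span of $i$ linearly independent vectors of $L$ of length $\le \lambda_i(L)$, so that $L \cap K$ is a rank-$i$ lattice with $\lambda_i(L\cap K) = \lambda_i(L)$ and dual $(L\cap K)^* = \pi_K(L^*)$ (orthogonal projection onto $K$); re-running the measure argument of the previous two steps inside $K$ bounds $\mu(L\cap K)$, hence $\lambda_i(L)$, against a minimum of $\pi_K(L^*)$. The delicate point is choosing $K$ so that $\lambda_1\bigl(\pi_K(L^*)\bigr)$ is controlled from below by $\lambda_{d-i+1}(L^*)$ — equivalently, one runs Banaszczyk's estimate directly on $L$ relative to a $(d-i+1)$-dimensional subspace spanned by short dual vectors. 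So the main obstacle is twofold: establishing the Gaussian tail bound, and doing the bookkeeping carefully enough to land on exactly $d$ (hence $d^2$ for $Q$); by contrast, Poisson summation, the inequality $\rho(L+v) \le \rho(L)$, and $\lambda_d \le 2\mu$ are all soft. (If only a polynomial bound were needed one could be cavalier with every constant, but the stated clean inequality requires Banaszczyk's sharp analysis.)
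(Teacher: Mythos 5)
First, note that the paper does not prove this statement at all: it is imported verbatim as Theorem~2.1 of Banaszczyk's 1993 paper, so there is no internal proof to compare against and you are really being judged on whether your reconstruction of Banaszczyk's argument closes. Your reduction is correct: writing $Q=A^tA$, $L=A\Z^d$, the Gram matrix of $L^*=A^{-t}\Z^d$ is $Q^{-1}$ and $\lambda_i(Q)=\lambda_i(L)^2$, so the claim is exactly $\lambda_i(L)\cdot\lambda_{d-i+1}(L^*)\le d$. The Gaussian-mass machinery (Poisson summation, $\rho(L+v)\le\rho(L)$, the lower bound $\rho(L+v)\ge(2-\rho(L^*))/\det L$, the tail estimate, and the covering-radius argument giving $\mu(M)\lambda_1(M^*)\le d/2$ and hence, with $\lambda_d\le 2\mu$, the cases $i\in\{1,d\}$) is the right engine, and you correctly flag the tail bound and the constant-chasing as the quantitative work still to be done.

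The genuine gap is the intermediate case $1<i<d$, which is where the theorem actually lives. Your proposed localisation --- $K$ the span of $i$ short primal vectors, $(L\cap K)^*=\pi_K(L^*)$, then rerun the rank-$i$ argument --- only yields $\lambda_i(L)\cdot\lambda_1\bigl(\pi_K(L^*)\bigr)\le i$, and to conclude you would need $\lambda_{d-i+1}(L^*)$ to be dominated by (a constant times) $\lambda_1\bigl(\pi_K(L^*)\bigr)$. Nothing forces this: orthogonal projection onto $K$ can turn a long dual vector into an arbitrarily short nonzero vector of $\pi_K(L^*)$, so a short projected vector does not produce $d-i+1$ independent short vectors of $L^*$; the step you call ``the delicate point'' is not bookkeeping but essentially the content of the theorem, and it is left unproved. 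Banaszczyk's actual argument for general $i$ avoids sublattice/projection duality altogether: rescale so that, for contradiction, $\lambda_i(L)>\sqrt d$ and $\lambda_{d-i+1}(L^*)>\sqrt d$; let $E$ and $F$ be the spans of the primal, respectively dual, vectors of norm at most $\sqrt d$, so $\dim E\le i-1$ and $\dim F\le d-i$, and the tail estimate concentrates the Gaussian mass of $L$ on $E$ and that of $L^*$ on $F$. Since $\dim E+\dim F<d$ one may pick a unit vector $u\in E^\perp\cap F^\perp$ and evaluate $\rho(L+tu)$ for $t$ of order $\sqrt d$ in two ways: via Poisson summation over $L^*$ it stays close to $\rho(L)$ because $u\perp F$ kills the phases on the dominant terms, while directly it is exponentially smaller than $\rho(L)$ because $u\perp E$ pushes $L\cap E$ out to distance $t$ and the remaining points are handled by the tail bound --- a contradiction. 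To complete your write-up you must replace the projection step by this two-sided concentration argument (and, as you note, carry Banaszczyk's sharp constants to land on $d$ rather than $O(d)$; for the purposes of this paper's Lemma~\ref{smallx} any fixed polynomial bound would in fact suffice asymptotically, affecting only the explicit constant in the final estimate).
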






\subsection{Volume} 
\label{sub:volume_and_relative_interior}

By making use of the isometry $\phi : \Sm^d \to \R^n$ we only need the standard notion of volume in $\R^n$. The $n$-dimensional volume of a measurable set $S \subset \R^{n'}$ of affine dimension at most $n$ is denoted by $\vol_n(S)$. In particular the $n$-dimensional unit ball $B^n$ has volume
\begin{align*}
	\vol_n(B^n) = \frac{\pi^{n/2}}{\Gamma(n/2+1)},
\end{align*}
where $\Gamma$ denotes Euler's gamma function. Furthermore, a simplex, a convex set spanned by $0$ and $n$ linearly independent points $x_1, \ldots, x_n \in \R^n$, has volume
\begin{align*}
	\vol_n(\text{conv}\{ 0, x_1, \ldots, x_n \}) = \frac{1}{n!} |\det( (x_i)_{i \in \lbrack n \rbrack} )|.
\end{align*}
See \cite{simplexvolume} for a proof. If $S \subset \R^n$ is a convex measurable set of affine dimension $n-1$ and $p \in \R^n$ is a point with orthogonal distance $h$ to $S$, then it holds that
\begin{align*}
  \vol_n(\text{conv}(S \cup \{ p \})) = \frac{h}{n} \cdot \vol_{n-1}(S).
\end{align*}

We call two measurable sets $S_1, S_2 \subset \R^n$ \emph{essentially disjoint} if $\text{Int}(S_1) \cap \text{Int}(S_2) = \emptyset$. In particular for $N$ pairwise essentially disjoint simplices $S_1, \ldots, S_N \subset \R^n$ it holds that
\begin{align*}
  \vol_n\left( \bigcup_{i=1}^N S_i \right) = \sum_{i=1}^N \vol_n(S_i).
\end{align*}



\section{An upper bound on the number of perfect forms} 
\label{sec:on_the_number_of_perfect_lattices}
In this section we prove an upper bound on the number of non-similar $d$-dimensional perfect forms. The bound of $e^{O(d^{2}\log(d))}$ improves on the bound of $e^{O(d^{3}\log(d))}$ proven by R. Bacher \cite{bacher2018number}. Bacher already conjectured such an upper bound with heuristic arguments. Our proof strategy does not seem to overlap with the proof or the heuristic arguments of Bacher. 

\begin{theorem} \label{numberofperfect}
The number $p_d$ of non-similar $d$-dimensional perfect quadratic forms has an upper bound of the form $e^{O(d^2 \log(d))}$. More precisely, $p_d$ satisfies
\begin{align*}
	p_d \leq \frac{(n-1)!}{\Gamma(\frac{n}{2}+\frac{1}{2})} \cdot \sqrt{ \frac{\pi^{n-1}}{2^{7n-d}} \cdot \frac{(d-1)^{n-1}}{d^{n}} } \cdot (d^3(d+7))^n, & & \text{ where } n = \binom{d+1}{2}.
\end{align*}
\end{theorem}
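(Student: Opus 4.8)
The plan is to make the informal strategy from the introduction quantitative. Each perfect form $Q$ gives a full-rank Voronoi domain $\V(Q) \subset \tilde{S}_{\geq 0}^d \subset \Sm_{\geq 0}^d$, and after scaling (say normalizing $\lambda_1(Q)=1$) the interiors of the Voronoi domains of non-similar perfect forms are pairwise disjoint, since a point in the interior of $\V(Q)$ has $Q$ as the unique form achieving the relevant minimum equations. Intersecting everything with a fixed compact region — say the slice $\{P \in \Sm_{\geq 0}^d : \Tr(P) \le T\}$ for a suitable $T$, or a ball of suitable radius in the trace norm — turns the disjointness into the volume inequality $\sum_Q \vol_n(\V(Q) \cap R) \le \vol_n(\Sm_{\geq 0}^d \cap R)$. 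Bounding the right side from above is easy: the PSD cone sits inside a ball whose radius is controlled by $T$, so this contributes the $\pi^{n-1}/\Gamma(\tfrac n2 + \tfrac12)$-type factor. The real content is a lower bound on $\vol_n(\V(Q) \cap R)$ that holds for \emph{some} form in each similarity class.

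So the core step is: given any perfect form $Q$, find a similar form $Q'$ (replacing $Q$ by $\alpha\, U^tQU$) whose Voronoi domain, intersected with $R$, has volume at least some explicit quantity depending only on $d$. Here I would use Lemma \ref{hkz}: apply HKZ reduction to get an arithmetically equivalent form whose diagonal entries $Q'_{ii}$ are bounded by $\tfrac{i+3}{4}\lambda_i(Q)$, and then rescale so that $\lambda_1 = 1$. The point of the reduction is that it forces $Q'$ — and hence all the rank-one generators $xx^t$ of $\V(Q')$ coming from minimal vectors — to live in a bounded region: minimal vectors $x$ satisfy $Q'[x]=1$, so together with the controlled diagonal and Banaszczyk's transference theorem (Theorem \ref{transference}) bounding $\lambda_d(Q^{-1})$, one gets an explicit bound on $\|x\|$, hence on $\|xx^t\|$ in trace norm. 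That explains the factor $(d^3(d+7))^n$-shape: the $d^2$ from transference times the $\tfrac{i+3}{4}$ from HKZ, raised to the power $n$ because we are bounding an $n$-dimensional volume. With all minimal-vector generators inside a ball of known radius, $R$ can be chosen (in terms of $d$) so that the \emph{entire} simplex $\text{conv}\{0, x_1x_1^t, \dots, x_n x_n^t\}$ spanned by $n$ of these generators lies inside $R$; that simplex is contained in $\V(Q') \cap R$, so it suffices to lower-bound the volume of \emph{one} such simplex.

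The lower bound on the simplex volume is $\tfrac{1}{n!}\,|\det((x_ix_i^t)_i)|$ — using the simplex volume formula from the preliminaries, with the vectors viewed in $\R^n$ via the isometry $\phi$. The crucial arithmetic input is that this determinant cannot be too small: the $x_i$ are integer vectors and the $x_ix_i^t$ are, after applying $\phi$, integer-ish vectors (entries in $\Z$ or $\sqrt2\,\Z$), so a nonzero $n\times n$ determinant among them is at least $1$ up to a power-of-$\sqrt2$ normalization — this is where the $2^{7n-d}$ / $2^{-?}$ powers of $2$ in the final constant come from, bookkeeping the $\sqrt2$'s in $\phi$ and the fact that we can always extract $n$ linearly independent generators (since $\V(Q')$ has full rank $n$). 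So $\vol_n(\V(Q')\cap R) \ge c(d)$ for an explicit $c(d)$, and dividing the volume of $\Sm_{\geq 0}^d \cap R$ by $c(d)$ yields the stated bound; the $(n-1)!$, the exponents $n-1$ versus $n$, and the $(d-1)^{n-1}/d^n$ all emerge from carefully tracking the dimension count (the cone slice is $(n-1)$-dimensional while the simplex is $n$-dimensional, contributing one extra height factor controlled by $d$).

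The main obstacle I anticipate is not any single estimate but making the "one good simplex per similarity class" argument genuinely uniform and keeping every constant explicit: one must (i) verify that after HKZ reduction and rescaling, \emph{all} minimal vectors — not just a chosen basis — are controlled, so that the simplex really sits inside the chosen region $R$ and so that the disjointness of Voronoi domains is not spoiled by the truncation to $R$; (ii) guarantee that among the (possibly many) minimal-vector generators one can pick $n$ that are linearly independent \emph{and} have determinant bounded below — linear independence is automatic from perfectness, but pinning the determinant to $\ge 1$ needs the integrality/normalization argument above; and (iii) optimize the choice of the truncation parameter $T$ so the resulting bound is as clean as the claimed closed form. I expect the transference-theorem step (bounding the norm of minimal vectors via $\lambda_d(Q^{-1}) \le d^2$) to be the conceptual crux, since naively the minimal vectors of a badly-skewed form could be enormous, and it is precisely duality plus HKZ reduction that rules this out.
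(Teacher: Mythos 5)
Your volumetric skeleton is exactly the paper's: essentially disjoint Voronoi domains of normalized perfect forms, truncation of the cone, a lower bound for one simplex per similarity class via the integrality of $\phi(xx^t)\in\Z^d\oplus\sqrt2\,\Z^{n-d}$ (giving the $2^{(n-d)/2}$), and an upper bound for the truncated PSD cone by a cone over an $(n-1)$-ball of radius $\sqrt{(d-1)/d}$ at height $1/\sqrt d$. Your choice to enlarge the truncation region $R$ so that the raw simplex $\mathrm{conv}\{0,x_1x_1^t,\dots,x_nx_n^t\}$ fits inside it is, by homogeneity of the cone, equivalent to the paper's normalization of the generators to trace one (it uses the fixed half-space $\Tr\le 1$ and the points $xx^t/(x^tx)$, picking up the factor $\prod_x (x^tx)^{-1}\ge (\tfrac18 d^3(d+7))^{-n}$); both give the same quotient $u_d/\ell_d$.

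The genuine gap is in your analogue of the key lemma bounding the minimal vectors. You apply HKZ reduction to $Q$ itself, obtaining $Q'_{ii}\le\frac{i+3}{4}\lambda_i(Q)$, and then claim that this ``controlled diagonal'' together with Banaszczyk's bound $\lambda_d(Q^{-1})\le d^2$ yields an explicit bound on $\|x\|$ for $x\in\mn Q'$. That implication does not follow: to bound $x^tx$ from $Q'[x]=1$ you need a \emph{lower} bound on the smallest eigenvalue of $Q'$, i.e.\ an upper bound on the largest eigenvalue of $Q'^{-1}$, and an upper bound on the diagonal of $Q'$ gives no such control (it only bounds $Q'$ from above). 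Likewise $\lambda_d(Q'^{-1})\le d^2$ alone does not bound the top eigenvalue of $Q'^{-1}$, since successive minima only see integer points and, without a reduced basis on the dual side, the form can be much larger in intermediate real directions. The paper's fix is precisely to apply Lemma \ref{hkz} to the \emph{dual} form $Q^{-1}$, so that $(Q'^{-1})_{ii}\le\frac{i+3}{4}\lambda_i(Q^{-1})\le\frac{i+3}{4}d^2$ by transference (using $\lambda_1(Q)=1$), whence $\Tr(Q'^{-1})\le\frac18 d^3(d+7)$ and $x^tx\le \mu_{\max}(Q'^{-1})\cdot Q'[x]\le\Tr(Q'^{-1})$ for every minimal vector $x$. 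With that one change (reduce the dual, not the primal) your argument closes and reproduces the stated constant; as written, the step ``HKZ on $Q$ + transference $\Rightarrow$ bounded minimal vectors'' is unsupported.
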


The proof makes use of a volumetric argument after showing the existence of a good representative for each similarity class of perfect forms. Recall that every perfect form $Q \in \Sm_{>0}^d$ has a full rank Voronoi domain $\V(Q) = \text{cone}( \{ xx^t : x \in \mn Q \} )$ in $\Sm^d$. A key point in the proof of Theorem \ref{numberofperfect} is that the Voronoi domains of the set of perfect forms up to scaling form an essentially disjoint partitioning of the rational closure $\tilde{S}_{\geq 0}^d = \text{cone}\{ xx^t : x \in \Z^n \}$.

\begin{lemma}[Voronoi \cite{voronoi1908nouvelles}] \label{voronoisubdivision}
  The Voronoi domains of the $d$-dimensional perfect forms cover $\tilde{S}_{\geq 0}^d$. Restricted to perfect forms $Q \in \Sm_{>0}^d$ with $\lambda_1(Q) = 1$, it holds that
\begin{align*}
  \tilde{\Sm}_{\geq 0}^d = \bigcup\limits_{\substack{Q \text{ perfect}\\ \lambda_1(Q)=1}} \V(Q),
\end{align*}
  where the Voronoi domains are essentially disjoint.
\end{lemma}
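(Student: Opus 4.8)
The plan is to realize both assertions as statements about the normal fan of a single convex body, Voronoi's \emph{Ryshkov polyhedron}
\[
  \mathcal{R} := \{ Q \in \Sm^d : Q[x] \geq 1 \text{ for all } x \in \Z^d \setminus \{0\} \} = \{ Q \in \Sm^d : \langle Q, xx^t \rangle \geq 1 \text{ for all } x \in \Z^d \setminus \{0\}\}.
\]
First I would collect the structural facts about $\mathcal{R}$. Using that $\langle A,B\rangle = \Tr(AB) \geq 0$ for positive semidefinite $A,B$, one checks that the recession cone of $\mathcal{R}$ equals $\{P : P[x] \geq 0 \text{ for all } x \in \Z^d\} = \Sm_{\geq 0}^d$ (integer directions being dense on the sphere), which is pointed, so $\mathcal{R}$ is a pointed polyhedron. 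On any compact subset of $\Sm_{>0}^d$ the smallest eigenvalue is bounded below, whence only finitely many $x \in \Z^d$ satisfy $Q[x] \leq 1$; this shows $\mathcal{R} \subset \Sm_{>0}^d$ and that $\mathcal{R}$ is \emph{locally finite}, cut out near each point by finitely many of the half-spaces $\langle \cdot, xx^t\rangle \geq 1$. Finally, a point $Q \in \mathcal{R}$ is a vertex precisely when the active normals $\{ xx^t : Q[x] = 1\}$ span $\Sm^d$, i.e. exactly when $Q$ is a perfect form with $\lambda_1(Q) = 1$; and the set of linear functionals $\langle P, \cdot\rangle$ minimized over $\mathcal{R}$ at such a vertex is the full-dimensional cone $\text{cone}\{xx^t : x \in \mn Q\} = \V(Q)$.

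For the covering, take any $P \in \tilde{\Sm}_{\geq 0}^d$, so $P$ is positive semidefinite, and consider the linear program $\min_{Q \in \mathcal{R}} \langle P, Q\rangle$. Since $\langle P, R\rangle \geq 0$ for all $R$ in the recession cone $\Sm_{\geq 0}^d$, the objective is bounded below; as $\mathcal{R}$ is a nonempty pointed locally finite polyhedron, the minimum is attained on a face, which contains a vertex $Q$, necessarily a perfect form with $\lambda_1(Q) = 1$. Complementary slackness (equivalently the KKT conditions for this linearly constrained program) then expresses the gradient $P$ as a nonnegative combination $P = \sum_{x} \mu_x\, xx^t$ of the active constraint normals, i.e. over $x \in \mn Q$ with $\mu_x \geq 0$. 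Hence $P \in \V(Q)$, proving $\tilde{\Sm}_{\geq 0}^d \subseteq \bigcup_{Q} \V(Q)$; the reverse inclusion is immediate since each generator $xx^t$ of $\V(Q)$ lies in $\tilde{\Sm}_{\geq 0}^d$.

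For essential disjointness I would show that whenever $P \in \text{Int}(\V(Q))$, the form $Q$ is the \emph{unique} minimizer of $\langle P, \cdot\rangle$ over $\mathcal{R}$. Since $P$ lies in the interior of the full-dimensional cone $\V(Q) = \text{cone}\{xx^t : x \in \mn Q\}$, the relative-interior description of a finitely generated cone yields a representation $P = \sum_{x \in \mn Q} \mu_x\, xx^t$ with $\mu_x > 0$ for \emph{every} $x \in \mn Q$. For any $Q' \in \mathcal{R}$ one then has $\langle P, Q'\rangle = \sum_x \mu_x\, Q'[x] \geq \sum_x \mu_x = \langle P, Q\rangle$, using $Q'[x] \geq 1$ and $Q[x] = 1$, so $Q$ is a minimizer; and any other minimizer $Q'$ forces equality, hence $Q'[x] = 1$ for all $x \in \mn Q$. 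As the outer products $\{xx^t : x \in \mn Q\}$ span $\Sm^d$ by perfectness, $Q$ and $Q'$ solve the same nondegenerate linear system $\langle Y, xx^t\rangle = 1$, so $Q' = Q$. Consequently two domains $\V(Q), \V(Q')$ with $Q \neq Q'$ cannot share an interior point, which is exactly essential disjointness.

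The routine inclusions and the LP/KKT bookkeeping are straightforward; the genuine work, and the step I expect to be the main obstacle, is the foundational claim that $\mathcal{R}$ is a \emph{locally finite} pointed polyhedron contained in $\Sm_{>0}^d$, with recession cone $\Sm_{\geq 0}^d$ and attained minima. Establishing local finiteness requires the eigenvalue/compactness estimate above (controlling how many integer $x$ can be minimal near a given form), and one must also verify that the optimal face of a bounded-below linear functional over such a polyhedron indeed contains a vertex. Once these convex-geometric facts are in place, both the covering and the disjointness reduce to the normal-cone description of $\mathcal{R}$ together with the perfectness (uniqueness) property recalled in Section~\ref{sec:basic_definitions_and_facts}.
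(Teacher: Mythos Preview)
Your proof of essential disjointness is, at its core, identical to the paper's: both write an interior point as a strictly positive combination $\sum_{x\in\mn Q}\mu_x\,xx^t$, compare $\langle P,Q'\rangle=\sum_x\mu_x\,Q'[x]\geq\sum_x\mu_x=\langle P,Q\rangle$, and use equality plus perfectness to force $Q'=Q$. The paper does this directly by comparing two perfect forms $Q,Q'$; you phrase it as ``$Q$ is the unique minimizer of $\langle P,\cdot\rangle$ over $\mathcal{R}$,'' but the computation is line-for-line the same.

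Where you differ is in scope: the paper proves only the disjointness and defers the covering statement to Martinet's book, whereas you supply a self-contained argument for covering via the Ryshkov polyhedron and LP/KKT duality. This normal-fan viewpoint (vertices of $\mathcal{R}$ $\leftrightarrow$ perfect forms with $\lambda_1=1$; their normal cones $\leftrightarrow$ Voronoi domains) is the standard modern treatment and has the advantage of unifying both assertions. The price, as you correctly identify, is that one must establish the foundational facts about $\mathcal{R}$ --- local finiteness, recession cone $\Sm_{\geq 0}^d$, and that a bounded linear functional attains its minimum at a vertex of this infinitely-constrained but locally polyhedral set. These are true and not hard, but they are exactly the content the paper outsources to the reference; your sketch of them is accurate.
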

\begin{proof}
  This result originates from the first reduction theory of Voronoi \cite{voronoi1908nouvelles}, see section 7.1 of \cite{martinet2002perfect} for a full proof. We do reprove the last part of the Lemma, that is, the part that the union is essentially disjoint. This is the only part from this Lemma that we need. Let $Q,Q' \in \Sm_{>0}^d$ be two perfect forms where we assume that $\lambda_1(Q) = \lambda_1(Q') = 1$. Suppose that there exists an $R \in \text{Int}(\V(Q)) \cap \text{Int}(\V(Q'))$. We have to show that $Q=Q'$. Because $R \in \text{Int}(\V(Q))$, there exist positive $c_x \in \R_{>0}$ for every $x \in \mn Q$ such that $R = \sum_{x \in \mn Q} c_x \cdot x x^t$. As a result we have
  \begin{align*}
    \langle R, Q' \rangle = \sum_{x \in \mn Q} c_x \cdot x^t Q' x \geq \sum_{x \in \mn Q} c_x = \sum_{x \in \mn Q} c_x \cdot x^t Q x = \langle R, Q \rangle,
  \end{align*}
  using that $\lambda_1(Q) = \lambda_1(Q') = 1$. Because $R \in \text{Int}(\V(Q'))$ we get symmetrically the inequality $\langle R, Q' \rangle \leq \langle R,Q \rangle$ and thus equality. Then we have
  \begin{align*}
    0 = \langle R, Q'-Q \rangle = \sum_{x \in \mn Q} c_x \left( x^tQ'x - 1 \right).
  \end{align*}
  Because $c_x > 0$ and $x^tQ'x \geq 1$ for all $x \in \mn Q$, it holds that $x^tQ'x  = 1$ for all $x \in \mn Q$, i.e. $\mn Q \subset \mn Q'$. We conclude by perfectness of $Q$ that $Q' = Q$. 
\end{proof}

\begin{figure}[h!]


\begin{center}\begin{tikzpicture}[scale=3]
    \draw[boundary3] (0,0) circle[radius=1];
    \draw[boundary3] (-1,0) -- (1,0) -- (0,1) -- cycle;
    \draw[boundary3] (-1,0) -- (0,-1) -- (1,0);
    \fill[smalltext] (1,0) circle[radius=0.5pt] node[anchor=west] {$(0,1)$};
    \fill[smalltext] (-1,0) circle[radius=0.5pt] node[anchor=east] {$(1,0)$};
    \fill[smalltext] (0,1) circle[radius=0.5pt] node[anchor=south] {$(1,1)$};
    \fill[smalltext] (0,-1) circle[radius=0.5pt] node[anchor=north] {$(-1,1)$};
    \fill[smalltext] (0.6,0.8) circle[radius=0.5pt] node[anchor=south west] {$(1,2)$};
    \fill[smalltext] (0.6,-0.8) circle[radius=0.5pt] node[anchor=north west] {$(-1,2)$};
    \fill[smalltext] (-0.6,0.8) circle[radius=0.5pt] node[anchor=south east] {$(2,1)$};
    \fill[smalltext] (-0.6,-0.8) circle[radius=0.5pt] node[anchor=north east] {$(-2,1)$};
    \draw[boundary3] (0,1) -- (0.6,0.8) -- (1,0) -- (0.6,-0.8) -- (0,-1) -- (-0.6,-0.8) -- (-1,0) -- (-0.6,0.8) -- cycle;
    \fill[smalltext] (0.8,0.6) circle[radius=0.5pt] node[anchor=west] {$(1,3)$};
    \fill[smalltext] (0.8,-0.6) circle[radius=0.5pt] node[anchor=west] {$(-1,3)$};
    \fill[smalltext] (-0.8,0.6) circle[radius=0.5pt] node[anchor=east] {$(3,1)$};
    \fill[smalltext] (-0.8,-0.6) circle[radius=0.5pt] node[anchor=east] {$(-3,1)$};

    \fill[smalltext] (5/13,12/13) circle[radius=0.5pt] node[anchor=south] {$(2,3)$};
    \fill[smalltext] (5/13,-12/13) circle[radius=0.5pt] node[anchor=north] {$(-2,3)$};
    \fill[smalltext] (-5/13,12/13) circle[radius=0.5pt] node[anchor=south] {$(3,2)$};
    \fill[smalltext] (-5/13,-12/13) circle[radius=0.5pt] node[anchor=north] {$(-3,1)$};
    \draw[boundary3] (0,1) -- (5/13,12/13) -- (0.6,0.8) -- (0.8,0.6) -- (1,0) -- (0.8,-0.6) -- (0.6,-0.8) -- (5/13,-12/13) -- (0,-1) -- (-5/13,-12/13) -- (-0.6,-0.8) -- (-0.8,-0.6) -- (-1,0) -- (-0.8,0.6) -- (-0.6,0.8) -- (-5/13,12/13) -- cycle;

    \draw (0,0.4) node {\scalebox{.8}{$\left(\begin{smallmatrix}
      2&-1\\-1&2
    \end{smallmatrix}\right)$}};
    \draw (0,-0.4) node {\scalebox{.8}{$\left(\begin{smallmatrix}
      2&1\\1&2
    \end{smallmatrix}\right)$}};
    \draw (0.45,0.73) node {\scalebox{.65}{$\left(\begin{smallmatrix}
      6&-3\\-3&2
    \end{smallmatrix}\right)$}};
    \draw (-0.45,0.73) node {\scalebox{.65}{$\left(\begin{smallmatrix}
      2&-3\\-3&6
    \end{smallmatrix}\right)$}};
    \draw (-0.45,-0.73) node {\scalebox{.65}{$\left(\begin{smallmatrix}
      2&3\\3&6
    \end{smallmatrix}\right)$}};
    \draw (0.45,-0.73) node {\scalebox{.65}{$\left(\begin{smallmatrix}
      6&3\\3&2
    \end{smallmatrix}\right)$}}; 
  \end{tikzpicture}
  \caption{Partial partitioning of $\Sm_{\geq 0}^2$ by Voronoi domains viewed on a fixed trace plane. A vector $x^t$ indicates the extreme ray $xx^t$. A matrix $Q$ indicates the Voronoi domain $\V(Q)$.}\end{center}\end{figure}

To turn Lemma \ref{voronoisubdivision} into an upper bound on the number of non-similar perfect forms, we need to find, in each similarity class, a perfect form $Q \in \Sm_{>0}^d$ for which $\V(Q)$ is `large'. To find such a good representative we use the following lemma.
\begin{lemma} \label{smallx}
  For all PQFs $Q \in \Sm_{> 0}^d$, there exists a PQF $Q' \in \Sm_{> 0}^d$ arithmetically equivalent to $Q$ such that $x^tx \leq \frac{1}{8}d^3(d+7)$ for all $x \in \mn Q'$. 
\end{lemma}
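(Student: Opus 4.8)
The plan is to build the representative $Q'$ by applying Hermite--Korkine--Zolotarev reduction (Lemma~\ref{hkz}) not to $Q$ itself but to the \emph{dual} form $Q^{-1}$, and then dualizing back. Concretely: apply Lemma~\ref{hkz} to $Q^{-1}$ to get $P \in \Sm_{>0}^d$ arithmetically equivalent to $Q^{-1}$ with $P_{ii} = P[e_i] \leq \frac{i+3}{4}\lambda_i(Q^{-1})$ for all $i \in \lbrack d \rbrack$, and set $Q' := P^{-1}$. Since dualizing preserves arithmetic equivalence (the remark before Theorem~\ref{transference}), $Q'$ is arithmetically equivalent to $Q$, and crucially we now control the diagonal of $(Q')^{-1} = P$: namely $((Q')^{-1})_{ii} \leq \frac{i+3}{4}\lambda_i(Q^{-1})$.

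Next I would bound the coordinates of a minimal vector $x \in \mn Q'$ one at a time via a Cauchy--Schwarz estimate: for any positive definite $A$ and any vectors $u,v$ one has $(u^t v)^2 \leq (u^t A u)(v^t A^{-1} v)$ (write $u^tv = (A^{1/2}u)^t(A^{-1/2}v)$). Taking $A = Q'$, $u = x$, $v = e_i$ and using $Q'\lbrack x \rbrack = \lambda_1(Q')$ gives $x_i^2 \leq \lambda_1(Q') \cdot ((Q')^{-1})_{ii}$. Because successive minima are invariant under arithmetical equivalence, $\lambda_1(Q') = \lambda_1(Q)$, so combining with the previous step yields $x_i^2 \leq \frac{i+3}{4}\,\lambda_1(Q)\,\lambda_i(Q^{-1})$.

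Finally I invoke Banaszczyk's transference theorem (Theorem~\ref{transference}) with index $1$, which gives $\lambda_1(Q)\,\lambda_d(Q^{-1}) \leq d^2$; since the successive minima are non-decreasing, $\lambda_i(Q^{-1}) \leq \lambda_d(Q^{-1})$ and hence $\lambda_1(Q)\,\lambda_i(Q^{-1}) \leq d^2$ for every $i$. Therefore $x_i^2 \leq \frac{i+3}{4}d^2$, and summing over $i$,
\begin{align*}
  x^t x = \sum_{i=1}^d x_i^2 \;\leq\; \frac{d^2}{4}\sum_{i=1}^d (i+3) \;=\; \frac{d^2}{4}\cdot\frac{d(d+7)}{2} \;=\; \frac{1}{8}\,d^3(d+7),
\end{align*}
which is exactly the claimed bound.

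I do not expect a serious obstacle once one notices that the HKZ reduction must be applied to the dual rather than to $Q$ itself: applying Lemma~\ref{hkz} directly to $Q$ controls $Q'_{ii}$ but leaves $((Q')^{-1})_{ii}$ uncontrolled (the last Gram--Schmidt norm of an HKZ basis can be tiny), so the coordinatewise Cauchy--Schwarz bound would be vacuous. Reducing the dual and dualizing back is precisely what makes every quantity appearing in the estimate tame, and the fact that the outcome matches $\tfrac18 d^3(d+7)$ exactly suggests the constants in Lemmas~\ref{hkz} and~\ref{transference} are being used in the intended way.
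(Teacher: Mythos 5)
Your proof is correct and follows essentially the same route as the paper: HKZ-reduce the dual $Q^{-1}$ (Lemma~\ref{hkz}), control $\Tr((Q')^{-1})$ via Banaszczyk's transference bound (Theorem~\ref{transference}), and conclude $x^tx \leq \lambda_1(Q)\cdot\Tr((Q')^{-1}) \leq \tfrac18 d^3(d+7)$ for $x \in \mn Q'$. The only cosmetic difference is that you obtain this last inequality by a coordinatewise Cauchy--Schwarz estimate $x_i^2 \leq (x^tQ'x)\,((Q')^{-1})_{ii}$ summed over $i$, whereas the paper bounds $x^tx \leq x^tQ'x/\mu_{\min}$ and then $1/\mu_{\min} \leq \Tr((Q')^{-1})$ via the eigenvalues; these are the same inequality in different clothing.
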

\begin{proof}
  We can assume without loss of generality that $\lambda_1(Q)=1$. Applying Lemma \ref{hkz} to the dual PQF $Q^{-1}$ we obtain a PQF $Q' \in \Sm_{>0}^d$ arithmetically equivalent to $Q$ such that
\begin{align*}
  (Q'^{-1})_{ii} \leq \frac{i+3}{4} \lambda_i(Q^{-1})
\end{align*}
for all $i \in \lbrack d \rbrack$. Furthermore, note that $\lambda_d(Q) \geq \ldots \geq \lambda_1(Q) = 1$ and thus by Theorem \ref{transference} we have
\begin{align*}
  \lambda_i(Q^{-1}) \leq \frac{d^2}{\lambda_{d-i+1}(Q)} \leq d^2
\end{align*}
for all $i \in \lbrack d \rbrack$.
 Combining these inequalities we obtain
\begin{align*}
  \Tr(Q'^{-1}) = \sum_{i=1}^d (Q'^{-1})_{ii} \leq d^2 \cdot \sum_{i=1}^d \frac{i+3}{4} = \frac{1}{8}d^3(d+7).
\end{align*}
In particular, this gives a lower bound on the eigenvalues $\mu_1, \ldots, \mu_d > 0$ of $Q'$, namely
\begin{align*}
  \frac{1}{\mu_i} \leq \sum_{j=1}^d \frac{1}{\mu_j} = \Tr(Q'^{-1}) \leq \frac{1}{8}d^3(d+7).
\end{align*}
But as $\min\limits_i \mu_i = \min\limits_{y \in \R^d - 0} \frac{y^tQ'y}{y^ty}$ and $\lambda_1(Q') = \lambda_1(Q)=1$ we have
\begin{align*}
  x^t x \leq \frac{x^tQ'x}{\min\limits_i \mu_i } \leq 1 \cdot \frac{1}{8} d^3 (d+7)
\end{align*}
for all $x \in \mn Q'$.
\end{proof}

To quantify the volume of the cones in $\Sm_{\geq 0}^d$, we bound them by the half-space $T_d = \{ Q \in \Sm^d : \langle Q, I_d \rangle = \Tr(Q) \leq 1 \}$ in $\Sm^d$. Recall the isometry $\phi : \Sm^d \to \R^n$ from Section \ref{sub:quadratic_form_}. 
By Lemma \ref{smallx}, we can obtain for any similarity class a perfect form for which the Voronoi domain is reasonably large.

\begin{cor} \label{lowerbound}
	Consider a perfect quadratic form $Q \in \Sm_{>0}^d$. Then there exists a perfect form $Q' \in \Sm_{>0}^d$ arithmetically equivalent to $Q$ such that
	\begin{align*}
  \vol_n(\phi(\V(Q') \cap T_d)) \geq \frac{1}{n!} \frac{2^{(n-d)/2}}{(\frac{1}{8} d^3 (d+7))^n} =: \ell_d.
\end{align*}
\end{cor}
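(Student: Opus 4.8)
The plan is to exhibit, inside $\V(Q')\cap T_d$ for a well-chosen representative $Q'$, an explicit full-dimensional simplex whose volume is easy to bound from below. First I would apply Lemma~\ref{smallx} to obtain $Q' \in \Sm_{>0}^d$ arithmetically equivalent to $Q$ with $x^t x \le \tfrac18 d^3(d+7)$ for every $x \in \mn Q'$. Since arithmetical equivalence is realized by a unimodular substitution, it carries minimal vectors to minimal vectors (via that substitution) and hence preserves perfectness; so $Q'$ is perfect and $\V(Q')$ has full rank $n$ in $\Sm^d$. Because $\V(Q') = \text{cone}\{xx^t : x \in \mn Q'\}$ spans $\Sm^d$, I can select $x_1,\dots,x_n \in \mn Q'$ whose outer products $x_1x_1^t,\dots,x_nx_n^t$ are linearly independent. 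Normalizing, put $y_i := \tfrac{1}{x_i^t x_i}\,x_i x_i^t$; each $y_i$ is a positive multiple of $x_i x_i^t$, hence still lies in the cone $\V(Q')$, and $\Tr(y_i)=1$, so $y_i \in T_d$.

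Next I would note that the simplex $S := \text{conv}\{0,y_1,\dots,y_n\}$ lies in $\V(Q')\cap T_d$: it sits in $\V(Q')$ since that is a convex cone containing $0$ and all $y_i$, and every convex combination $\sum_i t_i y_i$ (with remaining weight on $0$) satisfies $\Tr(\sum_i t_i y_i) = \sum_i t_i \le 1$, so $S \subset T_d$. As the $y_i$ are linearly independent, $S$ is a genuine $n$-dimensional simplex, so by the simplex volume formula and the isometry $\phi$,
\begin{align*}
  \vol_n(\phi(\V(Q')\cap T_d)) &\;\ge\; \vol_n(\phi(S)) \;=\; \frac{1}{n!}\,\bigl|\det(\phi(y_1),\dots,\phi(y_n))\bigr| \\
  &\;=\; \frac{1}{n!}\cdot\frac{\bigl|\det(\phi(x_1x_1^t),\dots,\phi(x_nx_n^t))\bigr|}{\prod_{i=1}^n x_i^t x_i}.
\end{align*}
The denominator is at most $(\tfrac18 d^3(d+7))^n$ by the choice of $Q'$, so it remains to bound the numerator.

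The crucial step is the lower bound $\bigl|\det(\phi(x_1x_1^t),\dots,\phi(x_nx_n^t))\bigr| \ge 2^{(n-d)/2}$, obtained by a lattice argument. For $x \in \Z^d$ the vector $\phi(xx^t)$ has integer coordinates in the $d$ diagonal places ($x_i^2$) and coordinates in $\sqrt2\,\Z$ in the $n-d$ off-diagonal places ($\sqrt2\, x_i x_j$); hence all the $\phi(x_ix_i^t)$ lie in the orthogonal lattice $L := \Z^d \oplus (\sqrt2\,\Z)^{\,n-d}\subset \R^n$, whose covolume is $(\sqrt2)^{\,n-d}=2^{(n-d)/2}$. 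The $n$ linearly independent vectors $\phi(x_ix_i^t)$ generate a full-rank sublattice $L' \subseteq L$, and $\bigl|\det(\phi(x_1x_1^t),\dots,\phi(x_nx_n^t))\bigr| = [L:L']\cdot\operatorname{covol}(L) \ge \operatorname{covol}(L) = 2^{(n-d)/2}$. Plugging this into the display gives $\vol_n(\phi(\V(Q')\cap T_d)) \ge \tfrac{1}{n!}\,2^{(n-d)/2}/(\tfrac18 d^3(d+7))^n = \ell_d$.

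I expect the only real care is needed in two places: first, keeping track of the $\sqrt2$ normalization in $\phi$ so that the covolume of $L$ comes out to exactly $2^{(n-d)/2}$; and second, justifying that the selected $x_i$ can be taken with $x_ix_i^t$ linearly independent — which is precisely perfectness of $Q'$ rephrased via the full rank of $\V(Q')$. The remainder is the simplex-volume computation and bookkeeping of constants, which is routine.
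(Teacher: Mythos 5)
Your proposal is correct and follows essentially the same route as the paper: apply Lemma~\ref{smallx}, pick $n$ minimal vectors with linearly independent outer products (perfectness), bound the volume of the resulting simplex inside $\V(Q')\cap T_d$, and lower-bound the determinant via the fact that the $\phi(x_ix_i^t)$ lie in $\Z^d\oplus\sqrt{2}\Z^{n-d}$. Your phrasing of that last step through the covolume and index of the lattice $L=\Z^d\oplus(\sqrt{2}\Z)^{n-d}$ is just a mild repackaging of the paper's observation that the nonzero determinant is an integer multiple of $2^{(n-d)/2}$.
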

\begin{proof}
According to Lemma \ref{smallx}, there exists a PQF $Q' \in \Sm_{>0}^d$ arithmetically equivalent to $Q$ that satisfies $x^tx \leq \frac{1}{8}d^3(d+7)$ for all $x \in \mn Q'$.
The polytope $V' = \phi(\V(Q') \cap T_d)$ is the convex hull of $0$ and $\phi\left(\frac{xx^t}{x^tx}\right)$ for all $x \in \mn Q' / \{ \pm 1 \}$.
As we are only in search of a lower bound for the volume of $V'$, we consider without loss of generality a subset $M_{Q'} \subset \mn Q'$ such that $|M_{Q'}| = n$ and $\text{rank}\{ \phi(xx^t) \in \R^n : x \in M_{Q'} \} = n$. Note that this is possible exactly because $Q'$ is perfect. Then $V'$ contains a simplex induced by $M_{Q'}$ and we get
\begin{align*}
 \vol_n(V') \geq \vol_n\left(\text{conv}\left(\{ 0 \} \cup \left\{ \phi\left(\frac{xx^t}{x^t x}\right) : x \in M_{Q'} \right\}\right)\right) = \frac{1}{n!} |\det(W)|
\end{align*}
with
\begin{align*}
  W = \left( \phi\left(\frac{xx^t}{x^t x}\right)\right)_{x \in M_{Q'}} \in \R^{n\times n}.
\end{align*} 
By using that $\phi(xx^t) \in \Z^d \oplus \sqrt{2}\Z^{n-d}$ for all $x \in \Z^d$ under some fixed ordering and that the determinant of $W$ is nonzero, because it has full rank, we get
\begin{align*}
  |\det(W)| &= \left(\prod_{x \in M_{Q'}} \frac{1}{x^tx}\right) \cdot |\det( ( \phi(xx^t) )_{x \in M_{Q'}} )| \\
  &\geq \left(\prod_{x \in M_{Q'}} \frac{1}{x^tx} \right) \cdot 2^{(n-d)/2} \geq \frac{2^{(n-d)/2}}{(\frac{1}{8} d^3 (d+7))^n}.
\end{align*}
So we can conclude that 
\begin{align*}
  \vol_n(\phi(\V(Q') \cap T_d)) \geq \frac{1}{n!} \cdot \frac{2^{(n-d)/2}}{(\frac{1}{8} d^3 (d+7))^n}.
\end{align*}
\end{proof}

Now we have found a good representative for each similarity class of perfect forms; the upper bound quickly follows.

\begin{proof}[Proof of Theorem \ref{numberofperfect}]
Let $P_d$ be a complete set of non-equivalent representatives of perfect $d$-dimensional quadratic forms with $\lambda_1(Q) = 1$ up to arithmetical equivalence. By Corollary \ref{lowerbound} we can assume that $\vol_n(\phi(\V(Q) \cap T_d)) \geq \ell_d$ for all $Q \in P_d$. By Lemma $1$ we have
\begin{align*}
  \bigcup\limits_{Q \in P_d} \V(Q) \subset \tilde{\Sm}_{\geq 0}^d \subset \Sm_{\geq 0}^d,
\end{align*}
where the $\V(Q)$ are essentially disjoint. This yields
\begin{align*}
 |P_d| \cdot \ell_d \leq \sum\limits_{Q \in P_d} \vol_n( \phi( \V(Q) \cap T_d )) \leq\vol_n(\phi(\Sm_{\geq 0}^d \cap T_d)).
\end{align*}
What remains is to find an upper bound for the volume of $\Sm_{\geq 0}^d \cap T_d$. Remark that $\Sm_{\geq 0}^d \cap T_d$ is the convex hull of $0 \in \Sm^d$ and the convex base $C_d = \Sm_{\geq 0}^d \cap \{ Q \in \Sm^d : \Tr(Q)=1 \}$. Furthermore $0$ has orthogonal distance $\norm{ \frac{1}{d}I_d }_F = \frac{1}{\sqrt{d}}$ to $C_d$. 
For $A = ( a_{ij} )_{i,j} \in C_d$ we have $a_{ij}^2 \leq a_{ii}a_{jj}$ for all $i,j \in \lbrack d \rbrack$, because $A$ is positive semidefinite. Therefore, for any $A \in C_d$ we have
\begin{align*}
	\langle A, A \rangle = \sum_{i,j \in \lbrack d \rbrack} a_{ij}^2 \leq \sum_{i,j \in \lbrack d \rbrack} a_{ii}a_{jj} = \Tr(A)^2 = 1.
\end{align*}
But then 
\begin{align*}
	\left\langle A-\frac{1}{d}I_d, A-\frac{1}{d}I_d \right\rangle = \langle A,A \rangle - \frac{2}{d} \Tr(A) + \frac{1}{d} \leq \frac{d-1}{d},
\end{align*}
and thus $C_d$ is contained in an $(n-1)$-dimensional ball with center $\frac{1}{d}I_d$ and radius $\sqrt{\frac{d-1}{d}}$. This implies the following upper bound.
\begin{align*}
	\vol_n(\phi(\Sm_{\geq 0}^d \cap T_d)) &\leq \vol_n(\phi(\text{conv}(C_d \cup \{ 0 \}))) \\
  &= \frac{1}{n} \cdot \frac{1}{\sqrt{d}} \cdot \left( \frac{d-1}{d} \right)^{\frac{n-1}{2}} \cdot \vol_{n-1}(B^{n-1}) =: u_d.
\end{align*}


Recall that $n = \binom{d+1}{2} = O(d^2)$ and $n! \leq n^n = e^{O(d^2 \log(d))}$. To conclude,
\begin{align*}
 |P_d| \leq \frac{u_d}{\ell_d} &= \frac{(n-1)!}{\Gamma(\frac{n}{2}+\frac{1}{2})} \cdot \sqrt{ \frac{\pi^{n-1}}{2^{7n-d}} \cdot \frac{(d-1)^{n-1}}{d^{n}} } \cdot (d^3(d+7))^n = e^{O(d^2\log d)}.
\end{align*}

\end{proof}

\section{An upper bound on the arithmetical minimum of perfect forms} 
\label{sec:an_upper_bound_on_the_arithmetical_minimum_of_perfect_forms}
We are grateful for the anonymous reviewer that suggested an additional remark, namely that Lemma \ref{smallx} also results in an explicit upper bound on the arithmetical minimum $\lambda_1(Q)$ of a primitive integral perfect form $Q$. We denote the lattice of integral symmetric matrices by $\Sm^d \left( \Z \right) \subset \Sm^d$.

\begin{theorem}
  Let $Q \in \Sm_{>0}^d\left( \Z \right)$ be a primitive and integral perfect form, then the arithmetical minimum $\lambda_1(Q)$ satisfies
  \begin{align*}
    \lambda_1(Q) \leq 2^{-(n+d/2)} \cdot \left( d^3(d+7)\right)^{n/2}, \text{ where }n = \binom{d+1}{2}.
  \end{align*}
\end{theorem}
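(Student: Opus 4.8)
The plan is to pass to a well-chosen arithmetically equivalent form via Lemma~\ref{smallx}, to use perfectness to pin it down by a linear system over its (now short) minimal vectors, and then to exploit integrality and primitivity to bound $\lambda_1(Q)$ by a determinant that is controlled by the short minimal vectors. First, apply Lemma~\ref{smallx} to obtain $Q' = U^tQU$ with $U \in \GL_d(\Z)$ and $x^tx \le \frac18 d^3(d+7)$ for all $x \in \mn Q'$. The map $M \mapsto U^tMU$ is a lattice automorphism of $\Sm^d(\Z)$, commutes with scalar multiplication, and preserves positive definiteness, perfectness, and all successive minima; hence $Q'$ is again a primitive integral perfect form with $\lambda_1(Q') = \lambda_1(Q)$, and it suffices to bound $\lambda_1(Q')$.

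Since $Q'$ is perfect, choose $M \subseteq \mn Q'$ with $|M| = n$ for which $\{xx^t : x \in M\}$ is linearly independent; then $Q'$ is the unique symmetric matrix with $\langle Q', xx^t\rangle = \lambda_1(Q')$ for all $x \in M$. For $x \in \Z^d$ put $\psi(xx^t) := (x_ix_j)_{i\le j} \in \Z^n$, so that $\langle Q', xx^t\rangle = q^t\psi(xx^t)$ where $q = (Q'_{11},\dots,Q'_{dd},2Q'_{12},\dots,2Q'_{d-1,d}) \in \Z^n$ is the integer coefficient vector of $Q'$. Then $\Psi := (\psi(xx^t))_{x\in M} \in \Z^{n\times n}$ is invertible and the defining system reads $\Psi^t q = \lambda_1(Q')\mathbf{1}$. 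Cramer's rule gives $\det(\Psi)\,q = \lambda_1(Q')\,p$ with $p := \operatorname{adj}(\Psi^t)\mathbf{1} \in \Z^n\setminus\{0\}$, hence $\lambda_1(Q')\gcd(p) = |\det\Psi|\gcd(q)$. Since $Q'$ is primitive its entries have gcd $1$, which forces $\gcd(q) \in \{1,2\}$, so $\lambda_1(Q) = \lambda_1(Q') \le 2\,|\det\Psi|$.

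It remains to estimate $\det\Psi$. Under the isometry $\phi$ one has $\phi(xx^t) = D\,\psi(xx^t)$ with $D$ diagonal having $d$ entries equal to $1$ and $n-d$ entries equal to $\sqrt2$, so $|\det\Psi| = 2^{-(n-d)/2}\,|\det((\phi(xx^t))_{x\in M})| = 2^{-(n-d)/2}\sqrt{\det G}$, where $G := (\langle xx^t,yy^t\rangle)_{x,y\in M} = ((x^ty)^2)_{x,y\in M}$ is a positive definite integer matrix with $G_{xx} = (x^tx)^2 \le (\tfrac18 d^3(d+7))^2$. Hadamard's inequality for positive definite matrices then gives $\det G \le \prod_{x\in M}(x^tx)^2 \le (\tfrac18 d^3(d+7))^{2n}$, so that $\lambda_1(Q) \le 2^{\,1-(n-d)/2}\big(\tfrac18 d^3(d+7)\big)^n$, which is already of the claimed order $e^{O(d^2\log d)}$.

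The part I expect to be delicate is sharpening this last estimate to the precise constant in the statement (whose main term is $\big(\tfrac18 d^3(d+7)\big)^{n/2}$ rather than $\big(\tfrac18 d^3(d+7)\big)^{n}$): the Hadamard bound uses only the lengths of the minimal vectors, whereas the improvement should come from the rank-$d$ factorization $G = H\circ H$ with $H = (x^ty)_{x,y\in M}$ the (singular) Gram matrix of the minimal vectors themselves --- equivalently from the identity $\det((\phi(xx^t))_{x\in M}) = \lambda_1(Q')^{n}\,\det(Q')^{-(d+1)/2}\,\det((\phi(y_iy_i^t))_i)$ with $y_i := \lambda_1(Q')^{-1/2}Q'^{1/2}x_i$ unit vectors --- together with $\det(Q') \ge 1$ and the lower bound $\mu_{\min}(Q') \ge \lambda_1(Q')\big/\big(\tfrac18 d^3(d+7)\big)$ on the least eigenvalue of $Q'$ that is implicit in the proof of Lemma~\ref{smallx}.
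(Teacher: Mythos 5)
Your proposal follows essentially the same route as the paper: pass to the representative given by Lemma~\ref{smallx} (you rightly spell out, where the paper is implicit, that arithmetical equivalence preserves integrality, primitivity, perfectness and $\lambda_1$), select $n$ minimal vectors whose outer products are linearly independent, write the perfection equations as an integral $n\times n$ linear system, use the adjugate together with primitivity to bound $\lambda_1(Q)$ by a determinant, and estimate that determinant via Hadamard. Your gcd bookkeeping ($\lambda_1(Q')\gcd(p)=|\det\Psi|\gcd(q)$ with $\gcd(q)\le 2$) is correct and in fact slightly sharper in the powers of $2$ than the paper's inequality $\lambda_1(Q)\le\det(A)$ for $A=\left(\phi'(x_ix_i^t)\right)_{i\in\lbrack n\rbrack}$.

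The step you flag as delicate is exactly where your bound and the stated constant diverge, and you should know that the paper contains no rank-$d$ refinement of the kind you speculate about. Its final step is the same Hadamard estimate you carried out:
\begin{align*}
\det(A)=2^{(n-d)/2}\det\left(\left(\phi(x_ix_i^t)\right)_{i}\right)\le 2^{(n-d)/2}\prod_{i=1}^n\norm{\phi(x_ix_i^t)},
\end{align*}
followed by the claim that this is at most $2^{(n-d)/2}\left(\tfrac18 d^3(d+7)\right)^{n/2}$. But $\norm{\phi(xx^t)}=\norm{xx^t}_F=x^tx$, so each factor is bounded by $\tfrac18 d^3(d+7)$, not by its square root, and the computation actually yields the exponent $n$ --- i.e.\ precisely your bound $\lambda_1(Q)\le 2^{1-(n-d)/2}\left(\tfrac18 d^3(d+7)\right)^n$ up to the power of $2$ --- rather than the exponent $n/2$ in the statement. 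So your argument is complete for the qualitative claim $\lambda_1(Q)\le e^{O(d^2\log d)}$ and matches what the paper's proof, as written, actually delivers; the missing square-root saving is not an idea you failed to find in the paper but an apparent slip in the paper's constant (it would need $\norm{\phi(x_ix_i^t)}\le\sqrt{x_i^tx_i}$). If you want to pursue the sharper constant, your suggestion of exploiting that $\left(\langle x_ix_i^t,x_jx_j^t\rangle\right)_{i,j}$ is the entrywise square of a rank-$d$ Gram matrix is the natural direction, but it is not what the paper does.
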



\begin{proof}
By Lemma $\ref{smallx}$ we can assume that $Q$ has $n$ minimal vectors $x_1, \ldots, x_n \in \Z^d$ with $x_i^tx_i \leq \frac{1}{8}d^3(d+7)$ that define a full rank system
  \begin{align*}
    \langle x_ix_i^t, Q' \rangle = \lambda_1(Q) \text{  for all } i \in \lbrack n \rbrack,
  \end{align*}
with the unique solution $Q'=Q$. This translates to an integral linear system
\begin{align*}
  A \cdot (Q_{ij}')_{i \leq j} = \lambda_1(Q) \cdot \mathds{1}^n & & \text{with } A := \left( \phi'(x_ix_i^t) \right)_{i \in \lbrack n \rbrack} \in \Z^{n \times n},
\end{align*}
using the embedding
\begin{align*}
  \phi' : \Sm^d(\Z) &\to \Z^d \oplus 2\Z^{n-d} : P \mapsto (p_{ij})_{i\leq j},
\end{align*}
where $p_{ii}:=P_{ii}$ and $p_{ij} := 2P_{ij}$ for $i < j$. This allows us to express $Q$ as
\begin{align*}
  \left( Q_{ij} \right)_{i\leq j} = \frac{\lambda_1(Q)}{\det(A)} \cdot \text{adj}(A) \cdot \mathds{1}^n \in \Z^n.
\end{align*}
The adjugate $\text{adj}(A)$ of an integral matrix is integral and thus $\frac{\det(A)}{\lambda_1(Q)} \cdot Q$ is integral.
As $Q$ is primitive we get that $\lambda_1(Q) \leq \det(A)$ and we conclude by applying the Hadamard inequality as follows:
\begin{align*}
  \det(A) &= \det \left(\phi'(x_ix_i^t)\right)_{i \in \lbrack n \rbrack} = 2^{(n-d)/2} \cdot \det \left(( \phi(x_ix_i^t)\right)_{i \in \lbrack n \rbrack} \\
  &\leq 2^{(n-d)/2} \cdot \prod\limits_{i=1}^n \norm{\phi(x_ix_i^t)} \leq 2^{(n-d)/2} \cdot \left( \frac{1}{8}d^3(d+7) \right)^{n/2} \leq e^{O(d^2\log d)}.
\end{align*}
\end{proof}


\bibliographystyle{elsarticle-num} 
\bibliography{citations}

\end{document}